\documentclass[12pt,reqno]{amsart}
\usepackage[all,2cell]{xy}

\usepackage{amssymb}

\usepackage[all]{xy}

\newtheorem{theorem}{Theorem}[section]
\newtheorem{proposition}[theorem]{Proposition}
\newtheorem{lemma}[theorem]{Lemma}
\newtheorem{corollary}[theorem]{Corollary}

\theoremstyle{definition}
\newtheorem{definition}[theorem]{Definition}
\newtheorem{remark}[theorem]{Remark}
\newtheorem{example}[theorem]{Example}

\numberwithin{equation}{section}

\DeclareMathOperator*{\image}{Im}

\newcommand{\ZZ}{\mathbb{Z}}

\newcommand{\FF}{\mathbb{F}}
\newcommand{\GG}{\mathbb{G}}

\newcommand{\scrC}{\mathcal{C}}

\newcommand{\scrf}{\mathfrak{f}}
\newcommand{\frakF}{\mathfrak{F}}
\newcommand{\Vect}{\rm Vect}

\begin{document}

\title{Monodromy of stratified vector bundles}

\author[I. Biswas]{Indranil Biswas}

\address{Department of Mathematics, Shiv Nadar University, NH91, Tehsil
Dadri, Greater Noida, Uttar Pradesh 201314, India}

\email{indranil.biswas@snu.edu.in, indranil29@gmail.com}

\author[M. Kumar]{Manish Kumar}

\address{Statistics and Mathematics Unit, Indian Statistical Institute,
Bangalore 560059, India}

\email{manish@isibang.ac.in}

\author[A.J. Parameswaran]{A. J. Parameswaran}

\address{Kerala School of Mathematics, Kunnamangalam PO, Kozhikode, Kerala, 673571, India}

\email{param@ksom.res.in}

\subjclass[2010]{14F35, 14G17, 14J60}

\keywords{Genuine ramification, fundamental group, stratified bundle, neutral Tannakian category}

\begin{abstract}
We explore the interconnections between the monodromy group of stratified bundles on a smooth
projective variety $X$ and the monodromy of the strongly semistable vector bundles $V$ on $X$
such that $c_1(V)$ and $c_2(V)$ are numerically trivial.
\end{abstract}

\maketitle

\section{Introduction}

Let $k$ be an algebraically closed field of characteristic $p$, with $p\,>\, 0$. Take an irreducible
smooth projective variety $X$ defined over $k$. A stratified vector bundle on $X$ is an
${\mathcal O}_X$--coherent ${\mathcal D}_X$--module (\cite{Gi}, \cite{Sa}) where  ${\mathcal D}_X$ denote the sheaf of differential operators,
in the sense of Grothendieck, on $X$ (\cite{Gr}, \cite{BO}).

Let $F_X\, :\, X\, \longrightarrow\, X$ be the absolute Frobenius morphism for $X$. So for any vector bundle
$V$ on $X$, the pullback $F^*_X V\, \longrightarrow\, X$ is the subbundle of
$V^{\otimes p}$ defined by $\{v^{\otimes p}\, \in\, V^{\otimes p}\,\,\big\vert\,\, v\, \in\, 
V\}$. A $F$--divisible vector bundle on $X$ is a sequence of vector bundles $\{E_i\}_{i\geq 
0}$ on $X$ indexed by the nonnegative integers together with an isomorphism $E_i\, 
\longrightarrow\, F^*_X E_{i+1}$ for every $i\, \geq\, 0$ (\cite{Gi}, \cite{Sa}).

By a result of Katz, there is a natural equivalence of categories between the stratified vector bundles and the
$F$--divisible vector bundles. The underlying vector bundle for the stratified vector bundle 
corresponding to a $F$--divisible vector bundle $\{E_i\}_{i\geq 0}$ is $E_0$ (see \cite{Gi},
\cite{Sa}). Similarly, the rank of an $F$--divisible vector bundle
$\{E_i\}_{i\geq 0}$ is the common rank of all $E_i$.

Let $\Vect^{str}(X)$ be the category of $F$--divisible vector bundles on $X$, which, as 
mentioned above, is equivalent to the category of stratified vector bundles on $X$. 
Henceforth, by a stratified vector bundle we will mean a $F$--divisible vector bundle.

The category of finite dimensional $k$--vector spaces will be denoted by $\Vect(k)$.
Fix a closed point $x$ of $X$. We have the fiber functor
\begin{equation}\label{e1}
\omega_x\ :\ {\rm Vect}^{str}(X)\ \longrightarrow\ {\rm Vect}(k),\,\,\ \ \{E_i\}_{i\geq 0}
\ \longmapsto\ (E_0)_x.
\end{equation}
Then the pair $(\Vect^{str}(X),\, \omega_x)$ forms a neutral Tannakian category.
Its Tannaka dual is the stratified fundamental group $\pi_1^{str}(X,\, x)$ \cite{Sa}, \cite{Gi}
(see \cite{SR}, \cite{DM}, \cite{No1}, \cite{No2} for Tannaka dual). It is shown in \cite{Sa} that
this group scheme is perfect. In \cite{Sa}, the author also gives a description of the
stratified fundamental group of abelian varieties.

Let $E_{\bullet}\,:=\,\{E_i\}_{i\ge 0}$ be a stratified vector bundle on $X$. The monodromy of
$E_{\bullet}$ is defined to be the Tannaka dual of the full Tannakian subcategory
$(E_{\bullet})^{str}$ of $\Vect^{str}(X)$ generated by $E_{\bullet}$. The monodromy of
$E_{\bullet}$ is denoted by $G^{str}_{E_{\bullet}}$.

For a strongly semistable vector bundle $V$ on $X$ such that both $c_1(V)$ and $c_2(V)$ are
numerically equivalent to zero, there
is the semistable monodromy $G^{ss}_{V}$ of $V$. More precisely, let
\begin{equation}\label{vss}
\Vect^{ss}(X)
\end{equation}
be the category of strongly semistable vector bundles $W$ on $X$ for which
both $c_1(W)$ and $c_2(W)$ are numerically equivalent to zero; morphisms are
all possible ${\mathcal O}_X$--linear homomorphisms. For a closed point 
$x\,\in\, X$, let $\omega_x\,:\, \Vect^{ss}(X)\, \longrightarrow\, \Vect(k)$ be the fiber
functor defined by $W\,\longmapsto\, W_x$. Then it is well-known that the pair $(\Vect^{ss}(X),\, 
\omega_x)$ forms a neutral Tannakian category (\cite{BH}, \cite{La}). Its Tannaka dual is the
$S$-fundamental group $\pi_1^{S}(X,\, x)$. The above group scheme
\begin{equation}\label{a2}
G^{ss}_V
\end{equation}
is the Tannaka dual of the full Tannaka subcategory
\begin{equation}\label{a3}
(V)^{ss}
\end{equation}
of $\Vect^{ss}(X)$ (see \eqref{vss}) generated by the vector bundle $V$.

Note that for a stratified vector bundle $\{E_i\}_{i\ge 0}$, if $E_0$
is strongly semistable, then every $E_i$ is strongly semistable; to see this, if
$W\, \subset\, (F^j_X)^* E_i$ violates the semistability condition of $(F^j_X)^* E_i$, then
$(F^i_X)^* W\, \subset\, (F^i_X)^* (F^j_X)^* E_i\,=\, (F^j_X)^* E_0$ violates the
semistability condition of $(F^j_X)^* E_0$. Also, for any stratified vector bundle
$\{E_i\}_{i\ge 0}$, the Chern class $c_j(E_i)$ is numerically equivalent to zero for all
$i\, \geq\, 0$ and $j\,\geq\, 1$.

The main result here is the following.

\begin{theorem}\label{thmi}
Let $E_{\bullet}\,:=\,\{E_i\}_{i\ge 0}$ be a stratified vector bundle on $X$ such that $E_0$
is strongly semistable.
\begin{enumerate}
\item For each $n\,\ge\, 0$, the functor $$\frakF_n\ :\ (E_{\bullet})^{str}\ \longrightarrow\
(E_n)^{ss},$$ that sends an object $V_{\bullet}\,=\,\{V_i\}_{i\ge 0}$ in $(E_{\bullet})^{str}$
to the vector bundle
$V_n$, is a tensor functor of Tannakian categories. The functor $\frakF_n$ induces injective 
morphism of group schemes $$\scrf_n\ :\ G^{ss}_{E_n}\ \longrightarrow\ G^{str}_{E_{\bullet}}.$$

\item For all $n\, \geq\, 0$, there are injective homomorphisms $\scrf_n\,\longrightarrow\,\scrf_{n+1}$
that make $\{ \image{\scrf_n} \}$ an increasing sequence of subgroup schemes of $G^{str}_{E_{\bullet}}$.

\item The closure of the direct limit $\lim_{n\to \infty} \scrf_n$ is $G^{str}_{E_{\bullet}}$.
\end{enumerate}
\end{theorem}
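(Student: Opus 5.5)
The plan is to work entirely on the Tannakian side. I will use the standard criteria of Deligne--Milne for a homomorphism of affine group schemes induced by a tensor functor. These say that it is a closed immersion if and only if every object of the target category is isomorphic to a subquotient of the image of some object. For density I will use the criterion in terms of stable subspaces (Chevalley).

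\textbf{Step 1: $\frakF_n$ is well defined.} An object $V_\bullet$ of $(E_\bullet)^{str}$ is a subquotient of some tensor construction $T(E_\bullet)$, built from sums, tensor products and duals. Kernels and cokernels in $\Vect^{str}(X)$ are computed levelwise, and they are locally free because all $V_i$ are vector bundles. Hence $V_n$ is a subquotient of $T(E_n)$ with all the intermediate maps being $\mathcal O_X$--linear. Since $E_n$ is strongly semistable with numerically trivial Chern classes, so is $T(E_n)$. A subquotient of degree zero of a strongly semistable bundle of degree zero is again strongly semistable. Moreover $c_1(V_n)$ and $c_2(V_n)$ are numerically trivial, as remarked before the theorem for any stratified bundle. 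So $V_n$ lies in $(E_n)^{ss}$.

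The functor $\frakF_n$ is clearly exact, faithful and compatible with $\otimes$ and duals.

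The fiber functors need care. On $(E_\bullet)^{str}$ the fiber functor is $(V_0)_x=((F^n_X)^*V_n)_x=(V_n)_x\otimes_{k,F^n}k$. Since $k$ is perfect, this is the Frobenius twist of $(V_n)_x$. I will therefore either replace $\omega_x$ by the twisted fiber functor $V_\bullet\mapsto (V_n)_x$, which is isomorphic to it after a semilinear identification, or compose with the corresponding twist of group schemes. Either way one gets $\scrf_n\colon G^{ss}_{E_n}\to G^{str}_{E_\bullet}$.

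\textbf{Step 2: injectivity.} By definition every object of $(E_n)^{ss}$ is a subquotient of some $T(E_n)=\frakF_n(T(E_\bullet))$. The Deligne--Milne criterion then makes $\scrf_n$ a closed immersion, which proves (1).

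\textbf{Step 3: part (2).} The functor $F_X^*$ sends $(E_{n+1})^{ss}$ to $(E_n)^{ss}$, using $F_X^*E_{n+1}\cong E_n$, and it satisfies $\frakF_n\cong F_X^*\circ \frakF_{n+1}$. This induces $\iota_n\colon G^{ss}_{E_n}\to G^{ss}_{E_{n+1}}$ with $\scrf_n=\scrf_{n+1}\circ\iota_n$. Since $\scrf_n$ is injective, so is $\iota_n$, and therefore $\image\scrf_n\subset\image\scrf_{n+1}$.

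\textbf{Step 4: part (3).} Let $H$ be the closure of $\bigcup_n\image\scrf_n$. By Chevalley's theorem, a closed subgroup $H\subset G$ equals $G$ as soon as every $H$--stable subspace of every representation of $G$ is $G$--stable. So take $V_\bullet$ in $(E_\bullet)^{str}$ and a subspace $L\subset\omega_x(V_\bullet)$ stable under $H$.

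For each $n$, the subspace $L$ is $G^{ss}_{E_n}$--stable. So there is a subobject $W_n\subset V_n$ in $(E_n)^{ss}$, i.e.\ a subbundle, whose fiber is $L$ up to the Frobenius twist. Both $F_X^*W_{n+1}$ and $W_n$ are subobjects of $V_n$ in the Tannakian category $(E_n)^{ss}$ with the same fiber at $x$. Since $\omega_x$ is exact and faithful, a subobject is determined by its fiber, so $F_X^*W_{n+1}= W_n$. Hence $W_\bullet=\{W_i\}$ is a stratified subbundle of $V_\bullet$ with fiber $L$. Thus $L$ is $G^{str}_{E_\bullet}$--stable, and $H=G^{str}_{E_\bullet}$.

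\textbf{Main obstacle.} The delicate point is the bookkeeping of Frobenius twists in the fiber functors. The identification of subobjects with fibers must be made consistently across all $n$, so that the same $L$ defines each $W_n$ and the equality $F_X^*W_{n+1}=W_n$ is an equality of subsheaves of $V_n$ and not merely an isomorphism. I would fix once and for all the identifications $(V_0)_x\cong (V_n)_x\otimes_{k,F^n}k$ induced by the given isomorphisms $V_i\cong F^*_XV_{i+1}$, and check compatibility with $\iota_n$.
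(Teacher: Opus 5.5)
Your proposal is correct and follows essentially the paper's route. Injectivity of $\scrf_n$ comes from the Deligne--Milne subquotient criterion, the nesting $\scrf_n=\scrf_{n+1}\circ\iota_n$ comes from $\frakF_n\cong F_X^*\circ\frakF_{n+1}$, and density comes from assembling the level-$n$ data into a family with $F_X^*W_{n+1}=W_n$, i.e.\ a stratified object, as in the proof of Theorem~\ref{thm.main}.

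Your Step 4 is in fact slightly more complete than the paper's argument. You verify the stable-subspace (subobject) condition, whereas the paper only checks that restriction to $H$ is fully faithful. Full faithfulness alone does not force $H=G^{str}_{E_{\bullet}}$: restriction of representations from a reductive group to a Borel subgroup is fully faithful. You also keep track of the Frobenius twist in the fiber functors, which the paper suppresses.
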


Note that there is no map between the S--fundamental group scheme $\Pi^S(X,\,x)$ and the stratified fundamental 
group scheme $\Pi^{str}(X,\,x)$, though both surject onto the \'etale fundamental group of $X$. In the process of 
computing monodromy, we also construct a group scheme $\Pi^{strss}(X,\,x)$ which is a quotient of 
$\Pi^{str}(X,\,x)$ and admits many natural morphisms of group schemes from $\Pi^S(X,\,x)$ to $\Pi^{strss}(X,\,x)$ 
(see \eqref{a6}). In Theorem \ref{thm.main} we show that the Zariski closure of the images of these morphisms is 
the whole group scheme $\Pi^{strss}(X,\,x)$.

In the last section, we show that for a smooth projective variety 
$X$ satisfying the condition that the tangent bundle $TX$ is trivial,
$$\Pi^{strss}(X,\,x)\ =\ \Pi^{str}(X,\,x)$$
(see Theorem \ref{tt}).

For an abelian variety $X$, the group scheme $\Pi^{str}(X,\,x)$ is explicitly described in \cite[Theorem 21]{Sa}.
In view of Theorem \ref{tt}, this also produces an explicit description of $\Pi^{strss}(X,\,x)$.

\section{Comparison of monodromies}

We adopt the following notation.

Take any stratified vector bundle $E_{\bullet}\,:=\,\{E_i\}_{i\ge 0}$.
For every integer $j\,<\,0$, set $E_j\,:=\,(F^{-j}_X)^* E_0$. For any $n\,\in\, \ZZ$,
denote by $E_{\bullet -n}$ the stratified bundle whose $i^{th}$ term is $E_{i+n}$
for all $i\,\ge\, 0$.

Define the function
\begin{equation}\label{ec}
C\ :\ \Vect^{str}(X)\ \longrightarrow\ \Vect^{str}(X), \ \ \, E_{\bullet}\ \longmapsto\ E_{\bullet-1}.
\end{equation}
The inverse functor that sends any $E_{\bullet}$ to $E_{\bullet +1}$ will be denoted by $\Gamma^{str}$;
note that $\Gamma^{str}$ is given by the pullback using the Frobenius morphism $F_X$, in other words,
$\Gamma^{str}(E_{\bullet})_j\,=\, F^*_X E_j$ for all $j\, \geq\, 0$.

\begin{definition}
Let $\Vect^{strss}(X)$ be the full subcategory of $\Vect^{str}(X)$ consisting of objects
$E_{\bullet}$ such that the vector bundle $E_0$ is strongly semistable.
\end{definition}

Denote the $\Vect (k)$ the category of finite dimensional vector spaces over the field $k$.
Fix a point $x\, \in\, X$. We have the fiber functor $\varpi_x\, :\, \Vect^{strss}(X)\, \longrightarrow\, \Vect (k)$
that sends any stratified vector bundle $\{E_i\}_{i\ge 0}$ to the fiber $(E_0)_x$ of $E_0$ over $x$.

\begin{proposition}\label{prop1}
The category $\Vect^{strss}(X)$ is a full Tannakian subcategory of $\Vect^{str}(X)$.
\end{proposition}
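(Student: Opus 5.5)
To prove that $\Vect^{strss}(X)$ is a full Tannakian subcategory of $\Vect^{str}(X)$, I will check that it is closed under the operations a Tannakian subcategory must respect: finite direct sums, tensor products, duals, the unit object, and subquotients in the abelian category $\Vect^{str}(X)$. Fullness holds by definition. The fiber functor $\varpi_x$ is just the restriction of $\omega_x$ in \eqref{e1}, so it stays exact and faithful. Together these give that $(\Vect^{strss}(X),\,\varpi_x)$ is a neutral Tannakian category sitting fully inside $\Vect^{str}(X)$.

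The tensor-type closures reduce to standard facts about strongly semistable bundles. For the unit, the trivial stratified bundle $\{{\mathcal O}_X\}$ has $E_0={\mathcal O}_X$, which is strongly semistable. For duals, $(F^j_X)^*(E_0^*)=((F^j_X)^*E_0)^*$, and the dual of a semistable bundle is semistable. For direct sums, all the $E_i$ have numerically trivial Chern classes, so every $(F^j_X)^*E_0$ has degree zero; a direct sum of semistable bundles of the same slope is semistable. For tensor products, I will invoke the known result that a tensor product of strongly semistable bundles is strongly semistable, since the Frobenius pullbacks of the factors are semistable. This holds in positive characteristic precisely because of the strong hypothesis (Ramanan--Ramanathan, or the characterization via curves; see \cite{La}).

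The substantive step is closure under subobjects and quotients. Let $V_\bullet\subset E_\bullet$ be a subobject in $\Vect^{str}(X)$ with $E_0$ strongly semistable, so that $V_0\subset E_0$ is a subsheaf that is in fact a subbundle. The point is that $V_0$ again has numerically trivial Chern classes, because it underlies a stratified bundle, as noted before Theorem \ref{thmi}. So for each $j$, $(F^j_X)^*V_0$ is a degree-zero subsheaf of the degree-zero semistable bundle $(F^j_X)^*E_0$. Any subsheaf $W\subset (F^j_X)^*V_0$ is then a subsheaf of $(F^j_X)^*E_0$, hence has $\mu(W)\le 0=\mu((F^j_X)^*V_0)$, so $(F^j_X)^*V_0$ is semistable. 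For quotients, a quotient $Q$ of $(F^j_X)^*E_0/(F^j_X)^*V_0$ is also a quotient of $(F^j_X)^*E_0$, so $\mu(Q)\ge 0$, and the quotient again has degree zero. This gives semistability of every Frobenius pullback of both the sub and the quotient. Kernels and cokernels of morphisms are handled the same way, since they are subobjects and quotients.

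I expect the main obstacle to be justifying the input that subobjects in $\Vect^{str}(X)$ are subbundles with vanishing degree, rather than arbitrary subsheaves. For this I will rely on the fact that $\Vect^{str}(X)$ is abelian with all objects locally free (\cite{Gi}, \cite{Sa}), together with the remark preceding Theorem \ref{thmi} that Chern classes of stratified bundles are numerically trivial. Once that is granted, the rest is slope bookkeeping.
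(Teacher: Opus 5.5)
Your proof is correct. Its skeleton is the same as the paper's: unit, duals, direct sums, tensor products via the Ramanan--Ramanathan theorem, then subquotients. The difference lies in the subquotient step.

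For kernels and cokernels, the paper invokes the theory of $\Vect^{ss}(X)$ from \cite{BH}, \cite{La}. There, for an arbitrary ${\mathcal O}_X$--linear map between strongly semistable bundles with numerically trivial $c_1$ and $c_2$, the kernel and cokernel are again locally free and strongly semistable, with numerically trivial Chern classes. That is a deep input: it rests on Langer's result that such bundles are numerically flat. It also genuinely needs the $c_2$ hypothesis, since without it a section of a strongly semistable degree-zero bundle can vanish in codimension two, giving a non-locally-free cokernel.

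You instead use that kernels and cokernels taken in $\Vect^{str}(X)$ are themselves stratified bundles. Local freeness then comes for free from the ${\mathcal D}_X$-module structure. Numerical triviality of Chern classes comes for free from $F$-divisibility, via $c_j(E_0)=p^{nj}c_j(E_n)$. What remains is the elementary fact that a degree-zero subbundle or quotient of a slope-zero semistable bundle is semistable, applied to each $(F^j_X)^*E_0$.

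Your route is more self-contained and uses only degrees, never $c_2$. The paper's citation proves more than the proposition needs, namely that $\Vect^{ss}(X)$ is itself abelian with sheaf-theoretic kernels and cokernels. The paper relies on that fact anyway for the Tannakian structure of $\Vect^{ss}(X)$.
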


\begin{proof}
Note that for an object $V_{\bullet}$ of $\Vect^{strss}(X)$, the vector bundle $V_0$ is strongly semistable and 
the Chern classes $c_\ell(V_0)\,=\,0$ for all $\ell\,>\,0$. For strongly semistable vector bundles $V$ and $W$
on $X$, the vector bundles $V\oplus W$ and $V\otimes W$ are strongly semistable \cite[p.~288, Theorem 3.23]{RR}.
Also, the dual $V^*$ is strongly semistable. Furthermore, if $c_\ell(V)\,=\,0\, =\, c_\ell(W)$ for all
$\ell\,>\,0$, then $c_\ell (V\oplus W)\,=\,0$ and $c_\ell (V\otimes W)\,=\,0$ for all $\ell\,>\,0$.

Let $V$ and $W$ be strongly semistable vector bundles such $c_1(V)$, $c_2(V)$, $c_1(W)$ and $c_2(W)$ are all 
numerically equivalent to zero. If $\phi\, :\, V\, \longrightarrow\, W$ is any ${\mathcal O}_X$--linear 
homomorphism, then $\text{kernel}(\phi)$ is strongly semistable vector bundle, if $\text{kernel}(\phi)\, 
\not=\,0$, and in that case, $c_j(\text{kernel}(\phi))$ is numerically equivalent to zero for all $j\, \geq\, 
1$ (see \cite{BH}, \cite{La}). Similarly, $\text{cokernel}(\phi)$ is strongly semistable, and
$c_j(\text{cokernel}(\phi))$ is numerically
equivalent to zero for all $j\, \geq\, 1$, if $\text{cokernel}(\phi)\, \not=\,0$. 
The proposition follows from these.
\end{proof}

\begin{definition}\label{def1}
The proalgebraic group scheme given by the Tannaka dual of the neutral Tannakian category $(\Vect^{strss}(X),\,
\varpi_x)$ will be denoted by $\Pi^{strss}(X,\,x)$.
\end{definition}

There is a natural fully faithful tensor functor
\begin{equation}\label{if}
\Vect^{et}(X)\,\ \longrightarrow\,\ \Vect^{str}(X)
\end{equation}
(see \cite[p.~7, Proposition 1.9]{Gi}, \cite[p.~702, Proposition 13]{Sa}). The functor
in \eqref{if} factors through $\Vect^{strss}(X)$, more precisely, we have
$$
\Vect^{et}(X)\ \longrightarrow\ \Vect^{strss}(X)\ \longrightarrow\ \Vect^{str}(X).
$$
Consequently, the surjective morphism of group schemes $\Pi^{str}(X,\,x)\,\longrightarrow
\,\Pi^{et}(X,\,x)$ factors through the Tannakian dual $\Pi^{strss}(X,\,x)$ of $\Vect^{strss}(X)$, so
$$
\Pi^{str}(X,\,x)\ \longrightarrow\ \Pi^{strss}(X,\,x)\ \longrightarrow
\ \Pi^{et}(X,\,x),
$$
where both the homomorphisms are actually surjective; their surjectivity is a straightforward
consequence of the criterion in \cite[p.~139, Proposition 2.21(a)]{DM} for surjectivity.

\begin{remark}\label{rem1}\mbox{}
\begin{enumerate}
\item The group scheme $\Pi^{strss}(X,\,x)$ is a perfect scheme. It has the same proof as \cite[Theorem 
11]{Sa}.

\item Lefschetz theorem holds for $\Pi^{strss}(X,\,x)$. This is because of the
following two facts:
\begin{itemize}
\item Lefschetz theorem holds for $\Pi^{str}(X,\,x)$
(see \cite[p.~30, Theorem 5.2]{Gi}), and

\item the restriction of a strongly semistable vector bundle $W$, such that $c_1(W)$ and $c_2(W)$ are
numerically equivalent to zero, is again strongly semistable \cite{La}, \cite{BH}.
\end{itemize}
\end{enumerate}
\end{remark}

Consider $\Vect^{ss}(X)$ defined in \eqref{vss}. Let
\begin{equation}\label{a1}
\frakF\ :\ \Vect^{strss}(X)\ \longrightarrow\ \Vect^{ss}(X)
\end{equation}
be the functor that sends any $E_{\bullet} \in \Vect^{str}(X)$ to
$E_0$, and sends any morphism
$$\phi\ :=\ \{\phi_i\}_{i\ge 0}\ :\ E_{\bullet}\ \longrightarrow\ E'_{\bullet}$$ of stratified vector
bundles to the homomorphism $\phi_0\,:\, E_0\, \longrightarrow\, E_0$.

\begin{proposition}\label{prop2}
The functor $\frakF$ in \eqref{a1} is a faithful functor of Tannakian categories.
\end{proposition}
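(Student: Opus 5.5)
The plan is to check, in order, that $\frakF$ lands in $\Vect^{ss}(X)$, that it is an exact $k$--linear tensor functor compatible with the fiber functors, and finally that it is faithful.

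\emph{Well-definedness.} Let $E_{\bullet}$ be an object of $\Vect^{strss}(X)$. By definition $E_0$ is strongly semistable. As observed in the introduction, every Chern class $c_j(E_0)$, $j\,\geq\,1$, of the underlying bundle of a stratified bundle is numerically trivial; in particular $c_1(E_0)$ and $c_2(E_0)$ are. Hence $E_0$ is an object of $\Vect^{ss}(X)$. A morphism $\{\phi_i\}$ is sent to $\phi_0$, which is an ${\mathcal O}_X$--linear map. Since morphisms in $\Vect^{ss}(X)$ are all ${\mathcal O}_X$--linear homomorphisms, $\frakF$ is a functor, and it is clearly $k$--linear.

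\emph{Tensor compatibility and exactness.} Direct sums, tensor products, duals and the unit object of $\Vect^{str}(X)$ are formed termwise: $(E_{\bullet}\otimes E'_{\bullet})_i\,=\,E_i\otimes E'_i$, with the Frobenius identifications tensored together, and $\mathbf{1}\,=\,\{{\mathcal O}_X\}$. Therefore $\frakF(E_{\bullet}\otimes E'_{\bullet})\,=\,E_0\otimes E'_0$ canonically, and the associativity, commutativity and unit constraints match. This makes $\frakF$ a tensor functor.

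For exactness, kernels and cokernels in $\Vect^{strss}(X)$ are computed termwise; this is how Proposition \ref{prop1} shows the subcategory is abelian, using that the kernel and cokernel of $\phi_i$ are again vector bundles. Likewise, kernels and cokernels in $\Vect^{ss}(X)$ are the sheaf-theoretic ones. So $\frakF$ is exact.

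Moreover $\omega_x\circ\frakF\,=\,\varpi_x$ on the nose, since both send $E_{\bullet}$ to $(E_0)_x$. Hence $\frakF$ is a functor of neutral Tannakian categories, and it induces a morphism $\pi_1^S(X,x)\,\longrightarrow\,\Pi^{strss}(X,\,x)$.

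\emph{Faithfulness, the only real point.} Suppose $\phi\,=\,\{\phi_i\}\,:\,E_{\bullet}\,\longrightarrow\,E'_{\bullet}$ satisfies $\phi_0\,=\,0$. Compatibility with the structure isomorphisms gives $\phi_{i}\,=\,(F^i_X)^*\phi_{i}'$-type relations; precisely, under $E_i\,\cong\,F_X^*E_{i+1}$ we have $\phi_i\,=\,F_X^*\phi_{i+1}$, so $\phi_0\,=\,(F_X^i)^*\phi_i$ for all $i$. The absolute Frobenius $F_X$ is faithfully flat, because $X$ is smooth, hence $F_X$ is flat by Kunz's theorem and it is surjective. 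Pulling back a homomorphism of vector bundles along a faithfully flat morphism is injective on $\mathrm{Hom}$. Concretely, locally $\phi_i$ is a matrix of functions $(a_{jk})$ and $(F^i_X)^*\phi_i$ is the matrix $(a_{jk}^{p^i})$, which vanishes only if each $a_{jk}$ does, since $X$ is reduced. Thus $\phi_i\,=\,0$ for every $i$, so $\phi\,=\,0$, and $\frakF$ is faithful.

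The main obstacle I expect is bookkeeping in this last step, namely making sure the compatibility $\phi_i\,=\,F_X^*\phi_{i+1}$ is correctly oriented with the chosen isomorphisms $E_i\,\to\,F^*_XE_{i+1}$. Once that is fixed, reducedness of $X$ finishes the argument.
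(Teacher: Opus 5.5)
Your proof is correct and follows the paper's argument. Faithfulness comes from the relation $\phi_0=(F^i_X)^*\phi_i$ together with injectivity of Frobenius pullback on homomorphisms of vector bundles; the paper justifies that injectivity by dominance of $F_X$, you by reducedness of $X$ (or faithful flatness via Kunz), and your extra checks (well-definedness, exactness, tensor compatibility, compatibility with the fiber functors) are routine points the paper leaves implicit.
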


\begin{proof}
Take two objects $E_{\bullet}$ and $E'_{\bullet}$ of $\Vect^{strss}(X)$. Let
$$
\phi\, :=\, \{\phi_i\}_{i\ge 0}\, :\, E_{\bullet}\ \longrightarrow\ E'_{\bullet}\, \ \text{ and }\,\
\phi'\, :=\, \{\phi'_i\}_{i\ge 0}\, :\, E_{\bullet}\ \longrightarrow\ E'_{\bullet}
$$
be two homomorphisms such that there is some $j\, \geq\, 0$ for which $\phi_j\,=\, \phi'_j$.
Then it can be shown that
\begin{equation}\label{eh}
\phi_i\,\ =\,\ \phi'_i
\end{equation}
for all $i\, \geq\, 0$. To see \eqref{eh} note that the Frobenius morphism $F_X$
is dominant, and hence the pullback of homomorphisms of sheaves by $F_X$ is an injective
homomorphism. The proposition follows immediately from \eqref{eh}.
\end{proof}

\begin{lemma}\label{F-tannakian}
Let $V$ be an object of $\Vect^{ss}(X)$. Let $F_X$ be the absolute Frobenius morphism of
$X$. Then the following two statements hold:
\begin{enumerate}
\item The pullback $F^*_XW$ is in $(F^*_X V)^{ss}$ for every $W$ in $(V)^{ss}$ (see \eqref{a3} for
$(V)^{ss}$ and $(F^*_X V)^{ss}$).

\item The pullback $F^*_X V$ lies in $(V)^{ss}$.
\end{enumerate}
\end{lemma}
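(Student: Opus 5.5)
The plan is to use two facts: $F^*_X$ behaves well on $\Vect^{ss}(X)$, and in characteristic $p$ the Frobenius pullback $F^*_X V$ sits naturally inside the $p$-th symmetric power of $V$. Throughout, I use the standard description of $(V)^{ss}$: its objects are exactly the subquotients, taken in the abelian category $\Vect^{ss}(X)$, of finite direct sums of the tensor bundles $T^{a,b}(V)\,:=\,V^{\otimes a}\otimes (V^*)^{\otimes b}$.

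\emph{Preliminary step.} I first check that $F^*_X$ defines an exact tensor functor $\Vect^{ss}(X)\,\longrightarrow\,\Vect^{ss}(X)$.
\begin{itemize}
\item Since $X$ is smooth, $F_X$ is flat, so $F^*_X$ is exact.
\item $F^*_X$ commutes with $\oplus$, $\otimes$ and duals.
\item It preserves strong semistability by definition.
\item It multiplies $c_i$ by $p^i$, so $c_1$ and $c_2$ remain numerically trivial.
\end{itemize}

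\emph{Proof of (1).} Take $W$ in $(V)^{ss}$ and write it as a subquotient $W\,=\,A/B$ with $B\,\subset\, A\,\subset\, \bigoplus_j T^{a_j,b_j}(V)$, all morphisms in $\Vect^{ss}(X)$. Apply the exact tensor functor $F^*_X$. This gives $F^*_X W\,=\,F^*_XA/F^*_XB$ with $F^*_XA\,\subset\,\bigoplus_j T^{a_j,b_j}(F^*_XV)$, since $F^*_X T^{a,b}(V)\,=\,T^{a,b}(F^*_XV)$. Hence $F^*_XW$ lies in $(F^*_XV)^{ss}$.

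\emph{Proof of (2).} I will use the map
$$\iota\ :\ F^*_XV\ \longrightarrow\ {\rm Sym}^p V,\qquad f\otimes v\ \longmapsto\ f\, v^p.$$
\begin{itemize}
\item \emph{Well defined and $\mathcal O_X$-linear.} We have $(gv)^p\,=\,g^p v^p$, which matches the relation $f\otimes gv\,=\,fg^p\otimes v$ in $F^*_X V$. In characteristic $p$ the symmetric algebra satisfies $(v+w)^p\,=\,v^p+w^p$, so $\iota$ is additive. This is the point where one must use ${\rm Sym}^p V$ rather than $V^{\otimes p}$: in the tensor power the $p$-th power map is not additive.
\item \emph{Injective onto a subbundle.} In a local frame $e_1,\dots,e_r$ of $V$, $\iota$ sends the frame $1\otimes e_i$ to $e_i^p$. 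These monomials are part of a basis of ${\rm Sym}^pV$, so $\iota$ is injective with locally free cokernel.
\item \emph{${\rm Sym}^pV$ lies in $(V)^{ss}$.} It is a quotient of $V^{\otimes p}$ by a morphism in $\Vect^{ss}(X)$.
\item \emph{$F^*_XV$ lies in $\Vect^{ss}(X)$.} This follows from the preliminary step.
\end{itemize}
So $\iota$ is a morphism in $\Vect^{ss}(X)$. Its image is the kernel of ${\rm Sym}^pV\,\longrightarrow\,{\rm coker}(\iota)$, and ${\rm coker}(\iota)$ lies in $\Vect^{ss}(X)$ by Proposition~\ref{prop1}. Since $(V)^{ss}$ is closed under subobjects, the image, which is isomorphic to $F^*_XV$, lies in $(V)^{ss}$.

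The main obstacle is the construction of $\iota$ and the verification that it is a linear embedding onto a subbundle; once that is done, the rest is formal.
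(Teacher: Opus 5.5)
Your proof is correct. Part (1) is the same argument as the paper's: Frobenius pullback is exact and commutes with $\oplus$, $\otimes$ and duals, so it sends a subquotient of $\bigoplus_j T^{a_j,b_j}(V)$ to the corresponding subquotient of $\bigoplus_j T^{a_j,b_j}(F^*_XV)$.

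For part (2) you take a genuinely different route. The paper asserts that $F^*_XV$ is the subbundle of $V^{\otimes p}$ formed by the vectors $v^{\otimes p}$. It then concludes by closure of $(V)^{ss}$ under subobjects, using $c_j(F^*_XV)=F^*_Xc_j(V)$ to know that $F^*_XV$ is an object of $\Vect^{ss}(X)$. Taken literally, that description fails when ${\rm rk}\,V=r\ge 2$. As you point out, $v\mapsto v^{\otimes p}$ is not additive, so that set is not a rank-$r$ subbundle; its linear span is the bundle of symmetric tensors, of rank $\binom{r+p-1}{p}$.

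You replace $V^{\otimes p}$ by its quotient ${\rm Sym}^pV$. There the identity $(v+w)^p=v^p+w^p$ makes $f\otimes v\mapsto fv^p$ an honest $\mathcal O_X$-linear embedding onto a subbundle, and your local-frame check confirms this. The price is one extra step: showing that ${\rm Sym}^pV$ lies in $(V)^{ss}$. Your phrase ``quotient by a morphism in $\Vect^{ss}(X)$'' is right, but you should say explicitly that ${\rm Sym}^pV$ is the cokernel of the morphism $\bigoplus_{i=1}^{p-1}V^{\otimes p}\to V^{\otimes p}$ given by $(1-\tau_i)_i$, where $\tau_i$ are the adjacent transpositions. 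This is a morphism between objects of $\Vect^{ss}(X)$, so its cokernel lies in $(V)^{ss}$.

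So your argument is the rigorous form of the paper's underlying idea: $F^*_XV$ is a subquotient of $V^{\otimes p}$ (dually, it is also a quotient of the symmetric tensors in $V^{\otimes p}$).

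A minor point: once $\iota$ is a monomorphism in $\Vect^{ss}(X)$, closure of $(V)^{ss}$ under subobjects finishes the proof directly. The detour through ${\rm coker}(\iota)$ is not needed.
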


\begin{proof}
Note that if $W\,\subset \,W'$ is a subbundle, then $F^*_XW$ is a subbundle of $F^*_XW'$, and furthermore,
$F^*_X(W'/W)\,=\, (F^*_XW)/(F^*_XW')$. There are natural isomorphisms
$F^*_X(W\oplus W')\,=\, (F^*_XW)\oplus (F^*_XW')$ and
$F^*_X(W\otimes W')\,=\, (F^*_XW)\otimes (F^*_XW')$. Also, $(F_X)^*W^*\,=\, (F_X^*W)^*$.
The first statement follows from these.

To prove the second statement, recall that $F^*_X V$ is a subbundle of $V^{\otimes p}$, where $p$ is the
characteristic of the field $k$. We know that $V^{\otimes p}$ is strongly semistable, and the Chern classes
$c_j(V^{\otimes p})$ are numerically equivalent to zero for $j\,=\, 1,\,2$, because $V$ has these properties.
Since $c_j(F^*_X V)\,=\, F^*_X (c_j(V))$, it follows that $F^*_X V$ lies in $(V^{\otimes p})^{ss}$. Thus
$F^*_X V$ lies in $(V)^{ss}$.
\end{proof}

For any $n\,\ge\, 0$, denote the functor
\begin{equation}\label{fn}
\frakF_n\ :=\ \frakF\circ C^{n} \ :\ \Vect^{strss}(X)\ \longrightarrow\ \Vect^{ss}(X),
\end{equation}
where $C$ and $\frakF$ are constructed in \eqref{ec} and \eqref{a1} respectively. So,
we have $$\frakF_n(E_{\bullet})\ =\ E_n.$$

For any stratified vector bundle $E_{\bullet}\,\in\, \Vect^{strss}(X)$, let
\begin{equation}\label{a4}
(E_{\bullet})^{str}
\end{equation}
be the full Tannakian subcategory generated by $E_{\bullet}$. Denote by
\begin{equation}\label{a5}
G^{str}_{E_{\bullet}}
\end{equation}
the affine group scheme given by the neutral Tannakian category $(E_{\bullet})^{str}$ in \eqref{a4}.

\begin{proposition}\label{prop3}
For every $n\,\ge\,  0$, the functor $\frakF_n$ in \eqref{fn} gives a faithful functor of neutral
Tannakian categories
$$\Vect^{strss}(X)\ \longrightarrow\ \Vect^{ss}(X)$$ that sends any $E_{\bullet}$ to $E_n$ and any morphism
$\phi\, :=\, \{\phi_i\}_{i\geq 0}\, :\, E_{\bullet}\, \longrightarrow\, E'_{\bullet}$ to $\phi_n\,:\,
E_n\, \longrightarrow\, E'_n$.

For any stratified vector bundle $E_{\bullet}\,\in\, \Vect^{strss}(X)$, the restriction of the functor
$\frakF_n$ in \eqref{fn} to $(E_{\bullet})^{str}$ (see \eqref{a4}) is a faithful functor of Tannakian
categories $$(E_{\bullet})^{str}\,\ \longrightarrow\,\ (E_n)^{ss}$$
(see \eqref{a3}). Moreover, this restriction of the functor $\frakF_n$ induces an injective
homomorphism of group schemes $$\scrf_n\ :\ G^{ss}_{E_n}\ \longrightarrow\ G^{str}_{E_{\bullet}}$$
(see \eqref{a5} and \eqref{a2} for $G^{str}_{E_{\bullet}}$ and $G^{ss}_{E_n}$ respectively).
\end{proposition}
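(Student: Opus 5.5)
The plan has three parts: first show that $\frakF_n$ is a faithful exact tensor functor on $\Vect^{strss}(X)$, then show it carries $(E_{\bullet})^{str}$ into $(E_n)^{ss}$, and finally deduce injectivity of $\scrf_n$ from the criterion in \cite[Proposition 2.21(b)]{DM}.

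\textbf{Step 1: $\frakF_n$ is a faithful exact tensor functor.} By \eqref{fn} we have $\frakF_n=\frakF\circ C^n$. The functor $C$ of \eqref{ec} is an autoequivalence of $\Vect^{str}(X)$, with inverse $\Gamma^{str}$, and it is compatible with direct sums, tensor products and duals, since these are all computed termwise.

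We must check that $C$ preserves $\Vect^{strss}(X)$. If $E_0$ is strongly semistable, then so is $E_1$, by the observation in the introduction that every $E_i$ is strongly semistable. Each $E_i$ also has numerically trivial Chern classes.

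Faithfulness of $\frakF_n$ follows from Proposition \ref{prop2} together with \eqref{eh}: if $\phi_n=\phi'_n$ then $\phi=\phi'$. Exactness follows because kernels and cokernels in $\Vect^{strss}(X)$ are computed termwise. This uses Proposition \ref{prop1} and the fact that $F_X$ is flat on smooth $X$, so that $F_X^*$ commutes with kernels and cokernels. Moreover $\frakF_n$ is compatible with the fiber functors via $(E_n)_x$, which is a fiber functor of the target. Strictly, the neutral fiber functor on $\Vect^{strss}(X)$ is $\varpi_x$, given by $(E_0)_x$, and $(E_n)_x$ differs from it by a Frobenius twist. Since $k$ is perfect, this only changes the identification by an isomorphism of fiber functors over $k$ after Frobenius twisting. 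I would handle this by using $\omega_x\circ\frakF_n$ as the fiber functor on $(E_{\bullet})^{str}$, which is legitimate because all fiber functors on a neutral Tannakian category over an algebraically closed field are isomorphic.

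\textbf{Step 2: $\frakF_n$ maps $(E_{\bullet})^{str}$ into $(E_n)^{ss}$.} The objects of $(E_{\bullet})^{str}$ are subquotients of finite direct sums of tensor spaces $E_{\bullet}^{\otimes a}\otimes (E_{\bullet}^*)^{\otimes b}$. Since $\frakF_n$ is an exact tensor functor, it sends these to subquotients of $E_n^{\otimes a}\otimes (E_n^*)^{\otimes b}$. Such subquotients lie in $(E_n)^{ss}$, because $(E_n)^{ss}$ is closed under subquotients within $\Vect^{ss}(X)$ and the images are strongly semistable with vanishing Chern classes by Step 1. So the restriction is a faithful exact tensor functor $(E_{\bullet})^{str}\to (E_n)^{ss}$, and it induces a homomorphism $\scrf_n:G^{ss}_{E_n}\to G^{str}_{E_{\bullet}}$.

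\textbf{Step 3: injectivity of $\scrf_n$.} By \cite[Proposition 2.21(b)]{DM}, $\scrf_n$ is a closed immersion if and only if every object of $(E_n)^{ss}$ is isomorphic to a subquotient of an object in the essential image of $\frakF_n$. The generator $E_n$ is the image of $E_{\bullet}$ itself. Every object of $(E_n)^{ss}$ is a subquotient of some $\bigoplus E_n^{\otimes a}\otimes (E_n^{*})^{\otimes b}$, and this is $\frakF_n$ of the corresponding stratified tensor construction. Hence the criterion holds and $\scrf_n$ is injective.

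I expect the main obstacle to be the fiber-functor compatibility in Step 1, that is, identifying $(E_n)_x$ with the fiber functor of $(E_{\bullet})^{str}$. The subquotient argument in Step 3 is formal.
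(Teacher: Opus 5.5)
Your proposal is correct and follows the paper's proof: faithfulness of $\frakF_n$ and of its restriction comes from the injectivity of $F_X^*$ on homomorphisms (i.e.\ \eqref{eh}), and injectivity of $\scrf_n$ comes from \cite[Proposition 2.21(b)]{DM}, with your Steps 2 and 3 spelling out what the paper leaves implicit, namely that $\frakF_n$ lands in $(E_n)^{ss}$ and that $E_n=\frakF_n(E_{\bullet})$ generates $(E_n)^{ss}$. One caveat on the point you flag as the main obstacle, which the paper also passes over: the twist lives in $\frakF_n$ itself, which is $F^{-n}$-semilinear (to scale $\phi_0$ by $\lambda$ you must scale $\phi_n$ by $\lambda^{1/p^n}$), so $\omega_x\circ\frakF_n$ is not a $k$-linear fiber functor and uniqueness of fiber functors does not literally apply; instead $\varpi_x$ is canonically the $F^n$-twist of $\omega_x\circ\frakF_n$, so $\scrf_n$ is a homomorphism only up to a Frobenius twist of the $k$-structure, which does not affect the closed-immersion conclusion.
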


\begin{proof}
If $\varphi_1,\, \varphi_2\, :\, A\, \longrightarrow\, B$ are two homomorphisms of vector bundles over
$X$ such that $F^*_X\varphi_1\,=\, F^*_X\varphi_2$, then we have $\varphi_1\,=\,\varphi_2$, because
$F_X$ is dominant. Using this
it is clear that both $\frakF_n$ and the restriction of $\frakF_n$ to $(E_{\bullet})^{str}$ are
faithful functors of Tannakian categories. The injectivity of $\scrf_n$ follows from the criterion
in \cite[p.~139, Proposition 2.21(b)]{DM}.
\end{proof}

\begin{lemma}\label{F-monodromy}
Take any object $V$ of $\Vect^{ss}(X)$. As in \eqref{a2}, denote by $G^{ss}_V$ the monodromy of $V$.
Consider the Frobenius morphism $${\mathbb F}_V\,\ :\,\ G^{ss}_V\,\ \longrightarrow\,\ G^{ss}_V.$$
The monodromy group scheme $G^{ss}_{F^*_XV}$ of $F^*_XV$ is naturally isomorphic to the
image of this homomorphism ${\mathbb F}_V$.
\end{lemma}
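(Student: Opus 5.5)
By Lemma \ref{F-tannakian}(2), $F^*_XV$ lies in $(V)^{ss}$, so $(F^*_XV)^{ss}$ is a full Tannakian subcategory of $(V)^{ss}$. By \cite[Proposition 2.21(a)]{DM}, the inclusion therefore induces a faithfully flat (surjective) homomorphism $q\,:\, G^{ss}_V\,\longrightarrow\, G^{ss}_{F^*_XV}$. The plan is to identify $q$ with the Frobenius $\mathbb{F}_V$ followed by an isomorphism onto its image. The key point is to compute how an element $g\in G^{ss}_V(R)$ acts on the fiber $(F^*_XV)_x$.

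First, I identify the fiber functor on $F^*_XV$. We have $(F^*_XV)_x = V_x\otimes_{k,F} k$, the Frobenius twist $V_x^{(p)}$. The embedding $F^*_XV\hookrightarrow V^{\otimes p}$, $v\otimes 1\mapsto v^{\otimes p}$, is a morphism in $(V)^{ss}$. By Tannakian functoriality, $g$ acts on $(F^*_XV)_x\subset V_x^{\otimes p}$ by the restriction of $g^{\otimes p}$. Choosing a basis $e_1,\dots,e_r$ of $V_x$, the elements $e_i^{\otimes p}$ generate the image. Write $g(e_j)=\sum_i a_{ij}e_i$, where $(a_{ij})$ is the matrix coordinate in $\mathcal{O}(G^{ss}_V)$. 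Then $g^{\otimes p}(e_j^{\otimes p})=\sum_i a_{ij}^p e_i^{\otimes p}$ modulo terms that vanish in the symmetric-power part. More precisely, $(\sum_i a_{ij}e_i)^{\otimes p}$ restricted to the subbundle generated by $p$-th powers gives $\sum a_{ij}^p e_i^{\otimes p}$, since the cross terms do not lie in the span of pure $p$-th powers and the image is stable. So the representation $G^{ss}_V\to \mathrm{GL}((F^*_XV)_x)$ is $g\mapsto (a_{ij}^p)$, i.e.\ it is the composite of $\mathbb{F}_V$ (relative Frobenius, taking coordinates to $p$-th powers) with the given faithful representation $G^{ss}_V\subset \mathrm{GL}(V_x)$, twisted by $F$.

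Next, $G^{ss}_{F^*_XV}$ is the image of $G^{ss}_V$ in $\mathrm{GL}((F^*_XV)_x)$ under this representation. This holds because $F^*_XV$ generates $(F^*_XV)^{ss}$, so the monodromy is the closed image of $G^{ss}_V\to \mathrm{GL}((F^*_XV)_x)$ by the quotient property of \cite[Proposition 2.21]{DM}. By the computation above this image equals the image of $G^{ss}_V\subset \mathrm{GL}(V_x)$ under the Frobenius of $\mathrm{GL}$, which is precisely $\mathbb{F}_V(G^{ss}_V)$, viewed inside $G^{ss}_V$ (or its Frobenius twist, identified since $k$ is perfect). Naturality comes from the fact that every step is functorial in Tannakian morphisms.

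The main obstacle is the middle step. One has to make rigorous that the action on the $p$-th power subspace of $V_x^{\otimes p}$ is given by $p$-th powers of matrix entries, and also sort out the identification of $G^{(p)}$ with $G$ over the perfect field $k$, so that "image of $\mathbb{F}_V$" is meaningful as a subgroup of $G^{ss}_V$. I would handle the first by using coordinates and the fact that the subspace spanned by pure tensors $v^{\otimes p}$ is $\mathrm{GL}$-stable, with $\mathrm{GL}$ acting through the Frobenius twist.
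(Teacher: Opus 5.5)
Your overall strategy differs from the paper's and is sound. You compute the action of $G^{ss}_V$ on the single fiber $(F^*_XV)_x$, then use that the monodromy of $F^*_XV$ is the image of $G^{ss}_V\to \mathrm{GL}((F^*_XV)_x)$. But the step you flag as the main obstacle is carried out incorrectly, and your proposed way of handling it would fail.

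\textbf{The gap.} There is no $\mathcal{O}_X$-linear map $F^*_XV\to V^{\otimes p}$ given by $v\otimes 1\mapsto v^{\otimes p}$.
\begin{enumerate}
\item Although $(hv)^{\otimes p}=h^pv^{\otimes p}$, the map $v\mapsto v^{\otimes p}$ is not additive: in $V^{\otimes p}$ the mixed words in $(v+w)^{\otimes p}$ each occur with coefficient $1$ and do not cancel.
\item Accordingly, the span of $e_1^{\otimes p},\dots,e_r^{\otimes p}$ is not stable. For $p=2$ and $ge_1=e_1+e_2$, $g^{\otimes 2}(e_1\otimes e_1)$ contains $e_1\otimes e_2+e_2\otimes e_1$.
\item The span of all $v^{\otimes p}$ is the whole space of symmetric tensors, of dimension $\binom{r+p-1}{p}>r$ for $r\ge 2$. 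So the mixed terms do lie in that span, contrary to what you say, and $\mathrm{GL}(V_x)$ does not act on it through the Frobenius twist.
\item Worse, for $p=2$ the Frobenius twist $V^{(p)}$ of the standard representation $V$ of $\mathrm{SL}_2$ has no nonzero equivariant map to $V\otimes V$. Weights force $e_i^{(p)}\mapsto c_i\, e_i\otimes e_i$, and the unipotent $e_2\mapsto te_1+e_2$ then forces $c_1=c_2=0$. So in general $F^*_XV$ is not a subobject of $V^{\otimes p}$ at all.
\end{enumerate}
The paper's description of $F^*_XV$ as the subbundle of $V^{\otimes p}$ given by the $v^{\otimes p}$ has the same inaccuracy. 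However, its proof of this lemma does not use that description for a computation.

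\textbf{The repair.} Work in the quotient $\mathrm{Sym}^pV$ of $V^{\otimes p}$, which lies in $(V)^{ss}$ as a cokernel of a morphism there.
\begin{enumerate}
\item In the commutative algebra $\mathrm{Sym}^\bullet V$ one has $(v+w)^p=v^p+w^p$ and $(hv)^p=h^pv^p$. So $f\otimes v\mapsto fv^p$ is a well-defined $\mathcal{O}_X$-linear map $F^*_XV\to \mathrm{Sym}^pV$.
\item This map is injective on fibers: at $x$ it sends $e_i\otimes 1$ to the independent monomials $e_i^p$. It therefore identifies $(F^*_XV)_x$ with the span of the $e_i^p$ in $\mathrm{Sym}^p(V_x)$.
\item For $g\in G^{ss}_V(R)$, $\mathrm{Sym}^p(g)$ sends $e_j^p$ to $(\sum_i a_{ij}e_i)^p=\sum_i a_{ij}^pe_i^p$ exactly, because the mixed multinomial coefficients are divisible by $p$.
\end{enumerate}
With this substitution, your matrix description $g\mapsto (a_{ij}^p)$ is justified. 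The rest of your argument (image of the relative Frobenius $G^{ss}_V\to (G^{ss}_V)^{(p)}$) then goes through.

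\textbf{Comparison with the paper.} The paper argues categorically. Using Lemma \ref{F-tannakian}, it shows that the full Tannakian subcategory generated by $F^*_XV$ equals the subcategory $\mathcal{C}$ generated by all $F^*_XW$, $W\in (V)^{ss}$. It then identifies the quotient of $G^{ss}_V$ defined by $\mathcal{C}$ with $G^{ss}_V\to \mathrm{Im}({\mathbb F}_V)$, taking for granted that $F^*_X$ induces the Frobenius on the Tannaka dual. Your route needs the action on only one fiber and makes Lemma \ref{F-tannakian}(1) unnecessary. Once repaired via $\mathrm{Sym}^p$, it actually proves the Frobenius identification that the paper merely asserts, including the twist by $(p)$ that the paper suppresses.
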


\begin{proof}
The functor $F^*_X$ from $\Vect^{ss}(X)$ to itself is a faithful functor of Tannakian categories. The induced
endomorphism of the Tannaka dual $$\Phi\ :\ \Pi^S(X,\,x)\ \longrightarrow\ \Pi^S(X,\,x)$$ is the absolute
Frobenius morphism. The same holds for the full subcategory $(V)^{ss}$ generated by $V$ which induces
absolute Frobenius morphism $${\mathbb F}_V\ :\ G^{ss}_V\ \longrightarrow\ G^{ss}_V.$$

Let ${\mathbb G}_V$ denote the subgroup scheme $Im({\mathbb F}_V)\, \subset\, G^{ss}_V$. The morphism
\begin{equation}\label{b1}
\widetilde{\mathbb F}_V\ :\ G^{ss}_V\ \longrightarrow\ {\mathbb G}_V
\end{equation}
given by ${\mathbb F}_V$ coincides with the one induced by the Tannakian full 
subcategory
\begin{equation}\label{ec2}
\scrC
\end{equation}
of $(V)^{ss}$ generated by objects $F^*_XW$ where $W$ runs over $(V)^{ss}$ (i.e., the image of 
the functor $F^*_X$).

To prove that ${\mathbb G}_V$ is naturally isomorphic to $G^{ss}_{F^*_XV}$, it
suffices to show that the full Tannakian subcategory of $(V)^{ss}$ generated by $F^*_XV$ is precisely $\scrC$
in \eqref{ec2} (see Lemma \ref{F-tannakian}(2)). Since $F^*_XV$ is an object of $\scrC$, it is enough
to show that $F^*_XW$ is in $(F^*_XV)^{ss}$ for every $W$ in $(V)^{ss}$. But this 
follows from Lemma \ref{F-tannakian}(1).
\end{proof}

\begin{theorem}\label{th1}
Let $E_{\bullet}$ be an object in ${\rm Vect}^{strss}(X)$. For each $i\, \geq\, 1$, the following diagram
is commutative:
 \[ \xymatrix{
 & G^{str} _{E_{\bullet}} & \\
 G^{ss} _{E_i}\ar@{->>}[r]^{\widetilde{\mathbb F}_{E_i}}\ar@/_1.5pc/[rr]_{\mathbb F_{E_i}}
& G^{ss}_{E_{i-1}}\ar[u]^{\scrf_{i-1}}
\ar@{^{(}->}[r] & G^{ss} _{E_i}\ar[ul]_{\scrf_i}
  }
 \]
where $\widetilde{\mathbb F}_{E_i}$ and
$\scrf_j$ are the homomorphisms constructed in \eqref{b1} and Proposition \ref{prop3} respectively, and
${\mathbb F}_{E_i}$ is the Frobenius morphism of $G^{ss}_{E_i}$.
\end{theorem}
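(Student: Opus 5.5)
Everything is translated into tensor functors between the Tannakian categories and checked there; by Tannaka duality, commutativity of the group diagram is equivalent to commutativity, up to natural tensor isomorphism compatible with the fiber functors, of the dual diagram of categories. The inclusion $G^{ss}_{E_{i-1}}\hookrightarrow G^{ss}_{E_i}$ comes from Lemma \ref{F-monodromy}. Since $E_{i-1}\,\cong\, F^*_X E_i$, that lemma identifies $G^{ss}_{E_{i-1}}$ with ${\mathbb G}_{E_i}\,=\,\image({\mathbb F}_{E_i})\,\subset\, G^{ss}_{E_i}$. On categories, this inclusion is dual to the restriction functor $R\,:\,(E_i)^{ss}\,\longrightarrow\,(E_{i-1})^{ss}$ given by $W\,\longmapsto\, F^*_X W$. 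Here Lemma \ref{F-tannakian}(1) guarantees that $F^*_XW$ lies in $(F^*_XE_i)^{ss}\,=\,(E_{i-1})^{ss}$. In the same way, $\widetilde{\mathbb F}_{E_i}$ is dual to the inclusion of the subcategory $\scrC\,=\,(F^*_XE_i)^{ss}\,=\,(E_{i-1})^{ss}$ into $(E_i)^{ss}$, which uses Lemma \ref{F-tannakian}(2).

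The bottom triangle states that ${\mathbb F}_{E_i}$ is the composite of $\widetilde{\mathbb F}_{E_i}$ with the inclusion. This holds by construction in \eqref{b1}, since $\widetilde{\mathbb F}_{E_i}$ is ${\mathbb F}_{E_i}$ with its target restricted to its image. Alternatively, on categories, the composite inclusion$\,\circ\,R$ is the functor $W\,\longmapsto\, F^*_XW$ on $(E_i)^{ss}$, and the proof of Lemma \ref{F-monodromy} shows that this functor induces the absolute Frobenius.

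The substantive part is the upper triangle. I must show that $\scrf_{i-1}$ equals $\scrf_i$ restricted to $G^{ss}_{E_{i-1}}\subset G^{ss}_{E_i}$, equivalently $\scrf_{i-1}\circ\widetilde{\mathbb F}_{E_i}\,=\,\scrf_i\circ{\mathbb F}_{E_i}$ (the two forms are equivalent because $\widetilde{\mathbb F}_{E_i}$ is surjective). Dually, I need a natural tensor isomorphism
$$\frakF_{i-1}\ \cong\ R\circ \frakF_i\ :\ (E_{\bullet})^{str}\ \longrightarrow\ (E_{i-1})^{ss}.$$
For an object $V_\bullet$ we have $\frakF_{i-1}(V_\bullet)\,=\,V_{i-1}$ and $R\frakF_i(V_\bullet)\,=\,F^*_XV_i$. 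The structure isomorphisms $\sigma_{i-1}\,:\,V_{i-1}\,\stackrel{\sim}{\longrightarrow}\, F^*_XV_i$ of the $F$-divisible bundle supply the comparison. They are natural because a morphism $\{\phi_j\}$ of stratified bundles satisfies $\sigma'_{i-1}\circ\phi_{i-1}\,=\,F^*_X\phi_i\circ\sigma_{i-1}$. They are tensor-compatible because tensor products and duals in $\Vect^{str}(X)$ are formed termwise with the induced structure isomorphisms.

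The remaining point, which I expect to be the main obstacle, is compatibility with the fiber functors. Both $G^{ss}_{E_{i-1}}$ and $G^{ss}_{E_i}$ use $W\,\longmapsto\, W_x$, whereas $G^{str}_{E_\bullet}$ uses $\omega_x(V_\bullet)\,=\,(V_0)_x$. For this to make sense, $\scrf_n$ has to be defined using the identification $(V_n)_x\,\cong\,(V_0)_x$ obtained from the composite of the structure isomorphisms $V_0\,\cong\,(F^n_X)^*V_n$, together with the canonical identification $(F^*_XW)_x\,=\,W_x\otimes_{k,F}k$ (recall $k$ is perfect). With this convention, the $\sigma_{i-1}$ at $x$ are compatible with these identifications by their very definition, so the natural isomorphism above respects the fiber functors. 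Tannaka duality \cite{DM} then yields equality of the induced homomorphisms, and hence the upper triangle commutes.
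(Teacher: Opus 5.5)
Your proposal is correct and follows essentially the same route as the paper. The bottom triangle is the factorization of ${\mathbb F}_{E_i}$ through its image, supplied by Lemma~\ref{F-monodromy} via $F^*_XE_i\cong E_{i-1}$. The upper triangle $\scrf_{i-1}=\scrf_i\circ\iota$ comes from passing to Tannaka duals in $\frakF_{i-1}=F^*_X\circ\frakF_i$; you additionally make explicit what the paper leaves implicit, namely that this identity is a natural tensor isomorphism given by the structure maps $\sigma_{i-1}$ and that it is compatible with the fiber functors.
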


\begin{proof}
Since $F^*_XE_i\,\cong\, E_{i-1}$, by Lemma \ref{F-monodromy} we get that $G^{ss}_{E_{i-1}}$ is the
image of the Frobenius morphism on $G^{ss}_{E_i}$. This yields the following commutative diagram:
 \[ \xymatrix{
 G^{ss} _{E_i}\ar@{->>}[r]\ar@/_1.5pc/[rr]_{{\FF}_{E_i}} & G^{ss}_{E_{i-1}}\ar@{^{(}->}[r] & G^{ss} _{E_i};
  }
 \]
the surjective map on the left is the map ${\FF}_{E_i}\, :\, G^{ss} _{E_i}\, \longrightarrow\,
{\rm Image}({\FF}_{E_i})\,=\, G^{ss}_{E_{i-1}}$, and the injective map on the right is the
inclusion map $G^{ss}_{E_{i-1}}\,=\,{\rm Image}({\FF}_{E_i})\, \longrightarrow\, G^{ss} _{E_i}$.

The group scheme $G^{ss}_{E_{i-1}}$ is the Tannaka dual of the Tannakian subcategory $(E_{i-1})^{ss}$ of $(E_i)^{ss}$
generated by all $F^*_XV$ with $V$ in $(E_i)^{ss}$. Also, pullback by the Frobenius
morphism $F_X$ is the functor of Tannakian categories from $(E_i)^{ss}$ to $(E_{i-1})^{ss}$ that
induce the inclusion of the Tannaka duals
\begin{equation}\label{ti}
\iota\ :\ G^{ss}_{E_{i-1}}\ \hookrightarrow\ G^{ss} _{E_i}.
\end{equation}

Since $\scrf_i$ is induced by the functor $\frakF_i$ and $\frakF_{i-1}\,=\,F^*_X\circ \frakF_i$, passing
to Tannaka duals, we obtain the equality $$\scrf_{i-1}\ =\ \scrf_i\circ \iota,$$
where $\iota$ is the map in \eqref{ti}. This completes the proof.
\end{proof}

The functor $\frakF_i\,: \,\Vect^{strss}(X)\,\longrightarrow\, \Vect^{ss}(X)$ between
neutral Tannakian categories (see \eqref{fn}) produces a homomorphism
\begin{equation}\label{a6}
\widehat{\frakF}_i\ :\ \Pi^S(X,\,x)\ \longrightarrow\ \Pi^{strss}(X,\,x)
\end{equation}
between the corresponding proalgebraic group schemes. Let
\begin{equation}\label{a7}
F_{\Pi^{S}}\ :\ \Pi^{S}(X,\, x)\ \longrightarrow\ \Pi^{S}(X,\, x)
\end{equation}
be the Frobenius morphism on $\Pi^{S}(X,\, x)$.

\begin{lemma}\label{lem-a}
The morphisms of group schemes in \eqref{a6} and \eqref{a7} satisfy the following:
$$\widehat{\frakF}_i \circ F_{\Pi^{S}}\,\ = \,\ \widehat{\frakF}_{i-1}.$$
\end{lemma}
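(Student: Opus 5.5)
Since both sides of the identity are morphisms of proalgebraic groups $\Pi^S(X,\,x)\longrightarrow \Pi^{strss}(X,\,x)$, I will compare the tensor functors that induce them. By Tannaka duality (\cite{DM}), a morphism $\Pi^S(X,\,x)\to \Pi^{strss}(X,\,x)$ is determined by the corresponding tensor functor $\Vect^{strss}(X)\to \Vect^{ss}(X)$ compatible with the fiber functors. The fiber-functor compatibility is an isomorphism $\omega_x\circ\frakF_i \cong \varpi_x$, given by the identifications $(E_i)_x \cong (E_0)_x$ coming from the chain $E_0\cong (F^i_X)^*E_i$. Up to this choice, the assignment is contravariant but order preserving in the sense that composition of functors corresponds to composition of the dual morphisms.

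By the proof of Lemma \ref{F-monodromy}, the Frobenius $F_{\Pi^S}$ in \eqref{a7} is the morphism induced by the tensor functor $F^*_X : \Vect^{ss}(X)\to\Vect^{ss}(X)$. The fiber-functor compatibility here is the canonical identification $(F^*_XW)_x = W_x\otimes_{k,F}k$; over the perfect field $k$ this is the standard identification making the induced map the absolute Frobenius. Hence $\widehat{\frakF}_i\circ F_{\Pi^S}$ is induced by the composite functor $F^*_X\circ\frakF_i$.

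Next I check that this composite agrees with $\frakF_{i-1}$. Indeed, $F^*_X\circ \frakF_i$ sends $E_\bullet$ to $F^*_XE_i\cong E_{i-1}=\frakF_{i-1}(E_\bullet)$. It sends a morphism $\{\phi_j\}$ to $F^*_X\phi_i$, which corresponds to $\phi_{i-1}$ under these isomorphisms by the definition of morphisms of $F$-divisible bundles. So the structure isomorphisms $E_{i-1}\to F^*_XE_i$ assemble into a natural isomorphism of tensor functors $\frakF_{i-1}\cong F^*_X\circ\frakF_i$. This is the same identity $\frakF_{i-1}=F^*_X\circ\frakF_i$ already used in the proof of Theorem \ref{th1}. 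Moreover, this natural isomorphism is compatible with the chosen trivializations at $x$, because both sides identify the fibre with $(E_0)_x$ through the same chain of structure maps.

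Finally, isomorphic fiber-compatible tensor functors induce equal morphisms of Tannaka duals, which gives $\widehat{\frakF}_i\circ F_{\Pi^S}=\widehat{\frakF}_{i-1}$. The step I expect to require care is this compatibility with the fiber functors. The identification $\omega_x\circ F^*_X\cong\omega_x$ is only Frobenius-semilinear, so I will phrase the comparison with the $k$-linear Frobenius twist, as is implicit in Lemma \ref{F-monodromy}, and check that the twist used for $F_{\Pi^S}$ matches the one in the structure isomorphisms. If necessary, I will argue at the level of the finite-type quotients $G^{ss}_{E_i}$ using the diagram of Theorem \ref{th1} and then pass to the limit.
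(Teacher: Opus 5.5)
Your proposal is correct and follows the paper's route. The paper proves this lemma the same way as Theorem~\ref{th1}: it passes to Tannaka duals in the identity $\frakF_{i-1}=F^*_X\circ\frakF_i$, using (as in Lemma~\ref{F-monodromy}) that $F^*_X$ induces $F_{\Pi^S}$, so that the composite functor induces $\widehat{\frakF}_i\circ F_{\Pi^S}$. Your extra bookkeeping of the fiber-functor trivializations and of Frobenius-semilinearity is more careful than the paper, which leaves these points implicit; note that $\frakF_i$ itself satisfies $\frakF_i(\lambda\phi)=\lambda^{1/p^i}\frakF_i(\phi)$, so one twist convention must be used throughout.
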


\begin{proof}
The proof of the lemma is exactly the same as the proof of Theorem \ref{th1}.
\end{proof}

{}From Theorem \ref{th1} it follows that we can take the direct limit of
$\image{\scrf_n}$ as $n\,\to\, \infty$. In view of Lemma \ref{lem-a} we can take the
direct limit of the subgroups $\image{\widehat \frakF_n}\, \subset\, \Pi^{strss}(X,\,x)$
as $n\, \to\, \infty$.

\begin{theorem}\label{thm.main}
The direct limit $\lim_{n\ge 0} \image{\scrf_n}$ (see Proposition \ref{prop3})
is Zariski dense in $G^{str}_{E_{\bullet}}$ (see \eqref{a5}).
Similarly, the Zariski closure of the direct limit $\lim_{n\ge 0} \image{\widehat \frakF_n}$
(see \eqref{a6}) in $\Pi^{strss}(X,\,x)$ is the entire group scheme $\Pi^{strss}(X,\,x)$.
\end{theorem}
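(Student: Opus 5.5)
Let $H\,\subset\, G^{str}_{E_{\bullet}}$ be the Zariski closure of $\bigcup_{n\ge 0}\image{\scrf_n}$. By Theorem \ref{th1} the images form an increasing chain of subgroup schemes, so their union is a subgroup functor and its closure $H$ is a closed subgroup scheme. The aim is to prove $H\,=\,G^{str}_{E_{\bullet}}$. For this I will use the Tannakian criterion for closed subgroups, in the form of \cite[Proposition 2.21]{DM} together with Chevalley's theorem on stabilizers of lines.

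A closed subgroup $H$ of an affine group scheme $G$ equals $G$ as soon as the following holds: for every representation $V$ of $G$, every one-dimensional subspace $L\,\subset\, V$ stabilized by $H$ is stabilized by $G$. Indeed, by Chevalley every closed subgroup is the stabilizer of a line in some finite dimensional representation. Since $(E_{\bullet})^{str}$ is generated by $E_{\bullet}$, it suffices to treat $V\,=\,\omega_x(W_{\bullet})$ for objects $W_{\bullet}$ of $(E_{\bullet})^{str}$.

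So fix $W_{\bullet}$ and a line $L\,\subset\,(W_0)_x$ stabilized by $\image{\scrf_n}$ for every $n$. The representation of $G^{ss}_{E_n}$ obtained by composing with $\scrf_n$ is, by Proposition \ref{prop3}, the fiber at $x$ of $\frakF_n(W_{\bullet})\,=\,W_n$. Here I must use the identification $(W_n)_x\,\cong\,(W_0)_x$ coming from the fiber functors, namely the Frobenius-twisted identification through $W_0\,=\,(F^n_X)^*W_n$. Under this identification, the fact that $L$ is stable under $G^{ss}_{E_n}$ means, by the Tannakian dictionary in $(E_n)^{ss}$, that $L$ is the fiber of a line subbundle $M_n\,\subset\, W_n$. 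This $M_n$ is an object of $(E_n)^{ss}$, so it is strongly semistable of degree zero.

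Next I check compatibility. Theorem \ref{th1} gives $\scrf_{n-1}\,=\,\scrf_n\circ\iota$, and $\iota$ is induced by $F^*_X$. Hence $F^*_X M_n$ and $M_{n-1}$ are two subobjects of $W_{n-1}$ with the same fiber $L$ at $x$. Since the fiber functor of $(E_{n-1})^{ss}$ is faithful and exact, subobjects are determined by their fibers, so $F^*_XM_n\,=\,M_{n-1}$. The isomorphisms $W_{n-1}\,\cong\, F^*_XW_n$ then restrict to isomorphisms $M_{n-1}\,\cong\,F^*_XM_n$. Thus $M_{\bullet}\,=\,\{M_n\}$ is a stratified line bundle, and a subobject of $W_{\bullet}$ in $\Vect^{strss}(X)$ (it is a sub stratified bundle because the inclusions commute with the structure isomorphisms). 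Its fiber is $L$, so $L$ is stable under $G^{str}_{E_{\bullet}}$, and therefore $H\,=\,G^{str}_{E_{\bullet}}$.

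The main obstacle is the bookkeeping of fiber functors. The functor $\frakF_n$ is a tensor functor, but compatibility with the fiber functors $\omega_x$ holds only up to the Frobenius twist $(W_0)_x\,=\,k\otimes_{F^n}(W_n)_x$. This twist is a bijection on lines because $k$ is perfect. I also have to check that the "stable line gives a subobject" step is applied inside $(E_n)^{ss}$ with the correct structure. Perfectness of $G^{str}_{E_{\bullet}}$ (Remark \ref{rem1}) may be needed to confirm that these twists are harmless.

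The second statement, for $\Pi^{strss}(X,\,x)$ and the maps $\widehat{\frakF}_n$, is proved by the same argument. Lemma \ref{lem-a} replaces Theorem \ref{th1}, and $W_{\bullet}$ now ranges over all of $\Vect^{strss}(X)$.
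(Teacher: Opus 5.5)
Your proof is correct, and it takes a different route from the paper's.

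The paper considers the restriction functor $\Phi\colon (E_{\bullet})^{str}\to \mathrm{Rep}_H$ together with $\Psi_n\colon \mathrm{Rep}_H\to (E_n)^{ss}$. It shows that $\Phi$ is \emph{fully faithful}: for $\alpha\in\mathrm{Hom}_H(\Phi V_{\bullet},\Phi W_{\bullet})$ the maps $\Psi_n(\alpha)$ satisfy $\Psi_n(\alpha)=F_X^*\Psi_{n+1}(\alpha)$, so they glue to a morphism $V_{\bullet}\to W_{\bullet}$. From this alone it concludes that $H\to G^{str}_{E_{\bullet}}$ is surjective.

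You apply the same gluing principle to \emph{subobjects} rather than morphisms. An $H$-stable line gives line subbundles $M_n\subset W_n$ in $(E_n)^{ss}$ with $F_X^*M_n=M_{n-1}$, hence a stratified subobject of $W_{\bullet}$. Chevalley's theorem then yields $H=G^{str}_{E_{\bullet}}$.

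Your choice of condition is the one that actually closes the argument. By \cite[Proposition 2.21(a)]{DM}, full faithfulness is only half of the criterion for surjectivity, and on its own it does not force a closed subgroup to be the whole group. For instance, restriction from $\mathrm{SL}_2$ to a Borel subgroup is fully faithful, because invariants of a Borel subgroup in an $\mathrm{SL}_2$-module are $\mathrm{SL}_2$-invariant. So your subobject step supplies what the paper's proof leaves implicit, and together with Chevalley it makes the Hom computation unnecessary.

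You are also more careful than the paper about the Frobenius twist $(W_0)_x=k\otimes_{F^n}(W_n)_x$ in comparing fiber functors. Handling it needs only two facts: $k$ is perfect, and subobject lattices do not depend on the $k$-linear structure. Perfectness of $G^{str}_{E_{\bullet}}$ is not needed, and it fails in general: in the paper's elliptic curve example the group is $\GG_m$.

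For the second statement, $\Pi^{strss}(X,x)$ is only pro-algebraic. There you should apply Chevalley to each algebraic quotient and to the (closed) image of $H$ in it.
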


\begin{proof}
Note that the morphism $\scrf_n\,:\,G^{ss} _{E_n}\longrightarrow G^{str}_{E_{\bullet}}$ is induced by the functor $\frakF_n$ restricted to the full Tannakian subcategory $(E_{\bullet})^{str}$
of $\Vect^{strss}(X)$ generated by $E_{\bullet}$ (see Proposition \ref{prop3}). It is
clear that the restriction of $\frakF_n$ is a faithful functor from $(E_{\bullet})^{str}$ to the category $(E_n)^{ss}$. We use $\frakF_n$ to also denote this restriction of the functor $\frakF_n$ to the subcategory $(E_{\bullet})^{str}$. 
Let $H$ denote the Zariski closure of $\lim_{n\ge 0} \image{\scrf_n}$ in $G^{str}_{E_{\bullet}}$.
 For every $n\,\ge\, 0$, there are morphisms
 $$G^{ss}_{E_n}\ \longrightarrow\ H\ \longrightarrow\ G^{str}_{E_{\bullet}}$$ that induce faithful functors
 $$(E_{\bullet})^{str}\ \xrightarrow{\,\,\,\Phi\,\,\,}\ Rep_H \ \xrightarrow{\,\,\,\Psi_n\,\,\,}
\, (E_n)^{ss}$$
such that $\Psi_n\circ\Phi\,=\, \frakF_n$.
 Also, since $F_X^*\circ \frakF_n\,=\,\frakF_{n-1}$, it follows that $F_X^*\circ \Psi_n\,=\,\Psi_{n-1}$.

 Let $V_{\bullet}$ and $W_{\bullet}$ be two objects in $(E_{\bullet})^{str}$, and $\alpha
\,\in\, Hom_H(\Phi V_{\bullet},\,\Phi W_{\bullet})$.
We have $\Psi_n(\alpha)\,=\,F_X^{\ell*}\circ\Psi_{n+\ell}(\alpha)$ for all $\ell\,\ge\, 0$. In other words,
 $$\Psi_n(\alpha)\ =\ F_X^*\circ\Psi_{n+1}(\alpha)
$$
for all $n\,\ge\, 0$. This means that $\alpha_{\bullet}\,:=\,(\Psi_n(\alpha))_{n\ge 0}$ is in
$Hom(V_{\bullet},\,W_{\bullet})$ and $\Phi(\alpha_{\bullet})\,=\,\alpha$.
 Hence $\Phi$ is a fully faithful functor. Consequently, the induced homomorphism $H\,
\longrightarrow\, G^{str}_{E_{\bullet}}$ is also surjective.

The proof of the second part is similar.
\end{proof}

\begin{corollary}
 Let $E_{\bullet}$ be an object in $\Vect^{strss}(X)$. The monodromy $G^{str}_{E_{\bullet}}$ is connected if
and only if $G^{str}_{E_{\bullet}}$ is the Zariski closure of the union of the connected components,
containing the identity element, of group schemes $\scrf_n(G^{ss}_{E_n})$ for $n\,\ge\, 0$.
\end{corollary}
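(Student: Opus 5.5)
The plan is to derive the corollary from Theorem \ref{thm.main} together with Theorem \ref{th1}, handling the two implications separately. Write $G\,:=\,G^{str}_{E_{\bullet}}$ and $H_n\,:=\,\scrf_n(G^{ss}_{E_n})$. By Theorem \ref{th1}, $H_{n-1}\,=\,\scrf_i(\image{\mathbb F_{E_n}})\,\subset\, H_n$, so the $H_n$ form an increasing chain. Consequently the identity components $H_n^0$ also form an increasing chain: $H_{n-1}^0$ is a connected subgroup scheme of $H_n$ containing the identity, hence lies in $H_n^0$. Let $K$ be the Zariski closure of $\bigcup_n H_n^0$. Since it is the closure of an increasing union of subgroup schemes, $K$ is a closed subgroup scheme of $G$. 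It is connected, because it is the closure of an increasing union of connected subschemes that all contain the identity.

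For the implication ``if'': if $G\,=\,K$, then $G$ is the closure of a connected set and hence is connected.

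For the implication ``only if'', assume $G$ is connected. Since $G$ is perfect (the category is a subcategory of $\Vect^{str}(X)$, and Remark \ref{rem1}(1) applies), one may hope to argue as follows. The key observation is that Frobenius kills the finite component group. Consider $\mathbb F_{E_n}\,:\,G^{ss}_{E_n}\,\to\, G^{ss}_{E_n}$; its image is $G^{ss}_{E_{n-1}}$ by Lemma \ref{F-monodromy}. This does not immediately land in the identity component, so a different mechanism is needed. I would instead use the quotient $G/K$ directly. Although $K$ need not a priori be normal, the union $\bigcup H_n$ normalizes $\bigcup H_n^0$, since each $H_n^0$ is characteristic in $H_n$ and $H_m^0\,\subset\, H_n^0$ for $m\,\le\, n$. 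By density (Theorem \ref{thm.main}), the closure $G$ then normalizes $K$. So $K$ is normal in $G$, and $G/K$ is an affine group scheme. The image of $\bigcup H_n$ in $G/K$ is the increasing union of the finite groups $H_n/(H_n\cap K)$, which are quotients of $\pi_0(H_n)$, and this union is dense in $G/K$.

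The main obstacle is to show that $G/K$ is trivial using connectedness. A closed-point argument alone only shows that $G/K$ is a connected proalgebraic group in which a union of finite subgroups is dense. That is not by itself contradictory: a torus, for example, has dense torsion. So I would use the Tannakian description instead. $\mathrm{Rep}(G/K)$ is the full subcategory of $(E_{\bullet})^{str}$ consisting of objects $V_{\bullet}$ on which every $H_n^0$ acts trivially. For such $V_{\bullet}$, each $V_n$ is an object of $(E_n)^{ss}$ whose monodromy is finite, so $V_n$ becomes trivial on a finite (possibly non-reduced) torsor. Pulling back by a high power of Frobenius kills the infinitesimal part. This should show that $V_{\bullet}$ is étale trivializable, i.e. it lies in the image of $\Vect^{et}(X)$, so $G/K$ is a quotient of $\Pi^{et}$ and is profinite. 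A connected profinite group is trivial, hence $G\,=\,K$. The step that may fail is the uniform control over $n$ of the finite torsors trivializing $V_n$; I would handle it using the compatibility $F^*_X V_{n}\,=\,V_{n-1}$ to bound the étale part independently of $n$.
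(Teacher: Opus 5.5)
Your ``if'' direction is exactly the paper's proof: $K:=\overline{\bigcup_n H_n^0}$ is the closure of an increasing union of connected subgroup schemes through the identity, hence connected. In fact it is the \emph{only} direction the paper proves. Its one-line argument says nothing about why connectedness of $G^{str}_{E_{\bullet}}$ forces $G^{str}_{E_{\bullet}}=K$, so your ``only if'' half goes beyond the paper.

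Your reduction there is sound:
\begin{itemize}
\item $K$ is normal in $G$ by your density argument;
\item $G/K$ is algebraic, since $G$ is the monodromy of a single object;
\item the images $Q_n$ of $H_n$ in $G/K$ form an increasing chain of finite groups with dense union;
\item you rightly note that this alone gives no contradiction.
\end{itemize}

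However, you stop at the decisive step (``should show'', ``the step that may fail''), and the obstacle you anticipate is misidentified. Take $V_{\bullet}\in \mathrm{Rep}(G/K)$. Then every $H_n^0$ acts trivially, so the monodromy $G^{ss}_{V_n}$, being the image of $G^{ss}_{E_n}\cong H_n$, is a quotient of $\pi_0(H_n)$. It is therefore already finite \'etale: there is no infinitesimal part for Frobenius to kill, and the uniformity problem does not arise in the form you fear.

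What you need instead is that Frobenius pullback does not shrink finite \'etale monodromy. By Lemma \ref{F-monodromy}, $G^{ss}_{V_{n-1}}=G^{ss}_{F^*_XV_n}$ is the image of the Frobenius of $G^{ss}_{V_n}$, and Frobenius is bijective on a finite \'etale group. Equivalently, if $V_n\cong P\times^{\Gamma}W$ for an \'etale $\Gamma$-cover $P\to X$, then $F^*_XP\cong P$ and $F^*_XV_n\cong P\times^{\Gamma}W^{(p)}$, where $W^{(p)}$ is the Frobenius twist of $W$.

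Now take $V_{\bullet}$ to be a faithful representation of the algebraic group $G/K$. Then $Q_n\cong G^{ss}_{V_n}$, so $|Q_{n-1}|=|Q_n|$. Together with $Q_{n-1}\subset Q_n$, the chain is constant. Hence $G/K=\overline{\bigcup_n Q_n}=Q_0$ is finite \'etale, and connectedness of $G$ forces $G/K$ to be trivial, i.e.\ $G=K$.

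With this step supplied, your proposal proves the full equivalence, which the paper's proof does not. You also no longer need to show that $V_{\bullet}$ comes from $\Vect^{et}(X)$.
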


\begin{proof}
Since $\scrf_n(G^{ss}_{E_n})$ is an increasing sequence of group schemes, the union of their connected 
components is connected. Hence the Zariski closure is also connected.
\end{proof}

\section{Some examples}

Let $A$ be an irreducible smooth projective variety such that the tangent bundle $TA$ is trivial.
Abelian varieties satisfy this condition; see \cite{Ke} for other examples.

\begin{theorem}\label{tt}
The natural homomorphism
$$\Pi^{str}(A)\ \longrightarrow\ \Pi^{strss}(A)$$ is an isomorphism.
\end{theorem}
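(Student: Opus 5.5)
Since $\Vect^{strss}(A)$ is a full Tannakian subcategory of $\Vect^{str}(A)$ (Proposition \ref{prop1}), the homomorphism $\Pi^{str}(A,\,x)\,\longrightarrow\,\Pi^{strss}(A,\,x)$ is already known to be surjective. To show it is an isomorphism, it is enough to show that the inclusion $\Vect^{strss}(A)\,\subset\,\Vect^{str}(A)$ is an equivalence. Concretely, we must show that for every stratified bundle $E_{\bullet}\,=\,\{E_i\}_{i\ge 0}$ on $A$, the bundle $E_0$ is strongly semistable. The plan is to prove that every $E_i$ is semistable, with respect to a fixed polarization $H$, with slope zero. This suffices: by the remark in the introduction, $(F^j_A)^*E_0\,=\,E_{-j}$ in the notation of Section 2, so it is enough to know semistability of $E_0$ and of all its Frobenius pullbacks. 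For this we reverse the roles, using that each $E_j$ is itself a Frobenius pullback of $E_{j+1}$, so $(F^j_A)^*E_0 \,\cong\, (F_A^{j+i})^*E_i$.

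The key input from triviality of $TA$ is that the canonical bundle $K_A$ is trivial and $\Omega^1_A\,\cong\,\mathcal{O}_A^{\oplus d}$. Suppose some $E_i$, or a Frobenius pullback of it, is not semistable. Then take the maximal destabilizing subsheaf of $E_i$, say $W\,\subset\, E_i$, with $\mu(W)\,>\,0$ (slopes of the $E_i$ are $0$ since the Chern classes are numerically trivial). Consider $F_A^*W\,\subset\, F_A^*E_i\,=\,E_{i-1}$. The relevant tool is the standard bound on the instability of Frobenius pullbacks (Shepherd-Barron / Langer): $\mu_{\max}(F^*_AV)-\mu_{\min}(F^*_AV)$ is controlled by $(\mathrm{rk}\,V-1)\,\mu_{\max}(\Omega^1_A)$-type terms. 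The canonical connection on $F_A^*V$ gives an $\mathcal O$-linear second fundamental form $F_A^*W\,\to\,(F_A^*E_i/F_A^*W)\otimes\Omega^1_A$, and here $\Omega^1_A$ is trivial, so $\mu_{\max}(\Omega^1_A)\,=\,0$. With this input, Frobenius pullback does not increase the spread of the Harder--Narasimhan slopes beyond $p$ times the original: precisely, $L_{\max}(F^*V)\,=\,p\,\mu_{\max}(V)$ when $\Omega^1$ is trivial (Mehta--Ramanathan style argument: a semistable bundle on a variety with trivial cotangent bundle has semistable Frobenius pullback). So on $A$, semistable implies strongly semistable, and $\mu_{\max}(F^*V)\,=\,p\,\mu_{\max}(V)$ in general.

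Granting this, the argument finishes. We have $\mu_{\max}(E_0)\,=\,p^i\mu_{\max}(E_i)$ for every $i$. Also $\mu_{\max}(E_i)\in \frac{1}{r!}\ZZ\cdot(\text{something bounded})$: slopes of subsheaves of a rank $r$ bundle lie in $\frac{1}{r!}\,H^{d-1}\cdot \mathrm{NS}(A)$, a discrete set. Hence if $\mu_{\max}(E_0)\,>\,0$, then $\mu_{\max}(E_i)\,=\,p^{-i}\mu_{\max}(E_0)$ would be a positive element of a fixed discrete lattice tending to $0$, which is absurd. So $\mu_{\max}(E_0)\,=\,0$, and thus $E_0$ is semistable. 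Applying the same argument to $E_{\bullet-j}$, that is, to the stratified bundles $C^{j}$-shifted through $\Gamma^{str}$, gives semistability of all $F^{j*}_AE_0$; alternatively, use the step above directly. Hence $E_{\bullet}\in\Vect^{strss}(A)$, the two categories coincide, and so do their Tannaka duals.

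The main obstacle is the step ``semistable implies strongly semistable when $TA$ is trivial'', equivalently $\mu_{\max}(F_A^*V)\,=\,p\,\mu_{\max}(V)$. I would prove it via the Cartier connection. If $F_A^*V$ is not semistable, its maximal destabilizing subsheaf $S$ cannot be $\nabla$-invariant, since otherwise by Cartier descent it would descend to a subsheaf of $V$ of slope $\mu(S)/p\,>\,\mu(V)$. The resulting nonzero $\mathcal O$-linear map $S\,\to\,(F_A^*V/S)\otimes\Omega^1_A\,\cong\,(F_A^*V/S)^{\oplus d}$ contradicts the inequality $\mu_{\min}(S)\,>\,\mu_{\max}(F_A^*V/S)$. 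The same argument, applied to the Harder--Narasimhan filtration in general, gives the equality of maximal slopes.
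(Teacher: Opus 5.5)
Your proof is correct and follows the paper's route. The key lemma is the same: on $A$, the Frobenius pullback of a semistable bundle is semistable, because the second fundamental form of the maximal destabilizing subsheaf $S$ with respect to the Cartier connection lands in $(F^*_AV/S)^{\oplus d}$ and so vanishes, whence $S$ descends; as in the paper, this is then combined with the Frobenius-divisibility of $E_{\bullet}$. Your discreteness-of-slopes argument also supplies a justification the paper omits for its assertion that $E_m$ is semistable for $m\gg 0$, and for that step the inequality $\mu_{\max}(F^*_XV)\ge p\,\mu_{\max}(V)$, valid on any $X$, already suffices.
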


\begin{proof}
It is enough to show that for every stratified vector bundle $E_{\bullet}$ on $A$, the
vector bundle $E_0$ is strongly semistable.

For a semistable vector bundle $V$ on the variety $A$, it can be shown that the
Frobenius pullback $F^*_A V$ is also
semistable. Indeed, if $F^*_A V$ fails to be semistable, consider the maximal semistable subsheaf
$$
W \ \, \subset\ \, F^*_A V
$$
of maximal slope; so $W$ is the first nonzero term of the Harder--Narasimhan filtration of $F^*_A V$.
Since there is no nonzero homomorphism from $W$ to $(F^*_A V)/W$, and
$$\text{Hom}(W,\, (F^*_A V)/W)\otimes\Omega^1_A\ =\
\text{Hom}(W,\, (F^*_A V)/W)\otimes {\mathcal O}^{\oplus d}_A,$$
where $d\,=\,\dim A$ (recall that $TA$ is trivial), we have
\begin{equation}\label{ev}
H^0(A,\, \text{Hom}(W,\, (F^*_A V)/W)\otimes\Omega^1_A)\ =\ 0.
\end{equation}

Let ${\mathbb D}\, :\, F^*_A V\, \longrightarrow\, (F^*_A V)\otimes \Omega^1_A$ 
be the Cartier connection on $F^*_A V$. Consider the following composition of homomorphisms
$$
W \ \hookrightarrow\ F^*_A V \ \stackrel{\mathbb D}{\longrightarrow}\ (F^*_A V)\otimes \Omega^1_A
\ \longrightarrow\ ((F^*_A V)/W)\otimes \Omega^1_A.
$$
{}From \eqref{ev} it follows that this composition of homomorphisms vanishes identically.
Thus the Cartier connection on $F^*_A V$ preserves $W$.

Hence there is a subsheaf $W'\, \subset\, V$
such that $F^*_A W'\,=\, W$. But $W'$ violates the semistability condition for $V$ because $W$
violates the semistability condition for $F^*_A V$. Thus $F^*_A V$ is semistable if $V$ is so.

Take a stratified vector bundle $E_{\bullet}$ on $A$. So $E_m$ is semistable for all $m$ sufficiently large.
Take such a $m$. Therefore,
$$
(F^{m+n}_A)^* E_m \ =\ (F^n_A)^* ((F^m_A)^* E_m) \ =\  (F^n_A)^* E_0
$$
is semistable for all $n$. Thus $E_0$ is strongly semistable.
\end{proof}

\begin{example}
Let $X$ be an elliptic curve and $L_{\bullet}$ be the stratified vector bundle on $X$ such that $L_n$ is
a nontrivial bundle and $F^{n*}_X L_n$ is trivial for $n\,>\,1$. Then $G^{str}_{L_{\bullet}}$ is the
torus $\GG_m$. This is because $G^{ss}_{L_n}$ is $\mu_{p^n}$ sitting inside $\GG_m$. Since
$G^{str}_{L_{\bullet}}$ is the subgroup scheme of $\GG_m$ containing the Zariski closure of the union
of $\mu_{p^n}$, it must be the full $\GG_m$.
\end{example}

\begin{example}
We will show that the natural homomorphism $$\Pi^{str}(X,\, x)\ \longrightarrow\ \Pi^{strss}(X,\, x)$$ is, in 
general, not an isomorphism. Gieseker constructed examples of stratified vector bundles $\{E_i\}_{i\geq 0}$ such 
that $E_0$ is not semistable \cite{Gi1}. In his example, $X$ is a smooth projective curve of genus at least two, 
and $E_0$ is a nontrivial extension of $K^{-1/2}_X$ by $K^{1/2}_X$, where $K^{1/2}_X$ is a theta characteristic 
(a square-root of the canonical line bundle); see \cite[p.~99, Lemma 4]{Gi1} and \cite[p.~99, Proposition 
2]{Gi1}. In view of the existence of such examples it follows immediately that the natural homomorphism 
$\Pi^{str}(X,\, x)\, \longrightarrow\, \Pi^{strss}(X,\, x)$ is, in general, not an isomorphism.
\end{example}

\end{document}